\pgfplotsset{every tick label/.append style={font=\footnotesize}}
\pgfplotsset{compat=1.18}
\newcolumntype{K}[1]{>{\centering\arraybackslash$}p{#1}<{$}}
\newcolumntype{R}{>{\raggedleft\arraybackslash}X}
\newcolumntype{L}{>{\raggedright\arraybackslash}X}
\newcolumntype{C}{>{\centering\arraybackslash}X}
\newcolumntype{A}{>{\columncolor{gray!25}}C}
\newcolumntype{a}{>{\columncolor{gray!25}}c}
\newlength{\tablen}
\newcolumntype{.}{D{.}{.}{-1}}
\renewcommand\p@subfigure{\arabic{figure}.}
\renewcommand\p@subtable{\arabic{table}.}
\setlist[itemize]{leftmargin=2.5\parindent}
\setlist[enumerate]{leftmargin=2.5\parindent}
\def\addlegendimage{\csname pgfplots@addlegendimage\endcsname}
\theoremstyle{plain}
\newtheorem{lemma}{Lemma}%[section]
\newtheorem{proposition}{Proposition}%[section]
\theoremstyle{definition}
\theoremstyle{remark}
\let\@fnsymbol\@alph
\def\keywords{\vspace{.5em} % Add keywords
{\noindent \textit{Keywords}: }}
\def\AMS{\vspace{.5em} % Add keywords
{\noindent \textbf{\emph{MSC} class}: }}
\def\JEL{\vspace{.5em} % Add keywords
{\noindent \textbf{\emph{JEL} classification number}: }}
\title{A clustering approach for pairwise comparison matrices}
\author{Kolos Csaba \'Agoston\thanks{~Email: \emph{kolos.agoston@uni-corvinus.hu} \newline Corvinus University of Budapest (BCE), Institute of Operations and Decision Sciences, Department of Operations Research and Actuarial Sciences, Budapest, Hungary}
$\qquad \qquad$
S\'andor Boz\'oki\thanks{~Email: \emph{bozoki.sandor@sztaki.hun-ren.hu} \newline
Institute for Computer Science and Control (SZTAKI), Hungarian Research Network (HUN-REN), Laboratory on Engineering and Management Intelligence, Research Group of Operations Research and Decision Systems, Budapest, Hungary \newline
Corvinus University of Budapest (BCE), Institute of Operations and Decision Sciences, Department of Operations Research and Actuarial Sciences, Budapest, Hungary}
$\qquad \qquad$
\href{https://sites.google.com/view/laszlocsato}{L\'aszl\'o Csat\'o}\thanks{~Corresponding author. Email: \emph{laszlo.csato@sztaki.hun-ren.hu} \newline
Institute for Computer Science and Control (SZTAKI), Hungarian Research Network (HUN-REN), Laboratory on Engineering and Management Intelligence, Research Group of Operations Research and Decision Systems, Budapest, Hungary \newline
Corvinus University of Budapest (BCE), Institute of Operations and Decision Sciences, Department of Operations Research and Actuarial Sciences, Budapest, Hungary}} 
\date{\today}
\def\Dedication{
\begin{small}
{\noindent
``\emph{There can be many reasons for searching for representative objects. Not only can these objects provide a characterization of the clusters, but they can often be used for further work or research, especially when it is more economical or convenient to use a small set of $k$ objects instead of the large set one started off with.}''\footnote{~Source: \citet[p.~71]{KaufmanRousseeuw1990}.}
}
\end{small}

\vspace{0.5cm} 
\justify }
\begin{document}

\maketitle
\thispagestyle{empty}
\Dedication

\begin{abstract}
\noindent
We consider clustering in group decision making where the opinions are given by pairwise comparison matrices. In particular, the k-medoids model is suggested to classify the matrices since it has a linear programming problem formulation that may contain any condition on the properties of the cluster centres. Its objective function depends on the measure of dissimilarity between the matrices but not on the weights derived from them. Our methodology provides a convenient tool for decision support, for instance, it can be used to quantify the reliability of the aggregation. The proposed theoretical framework is applied to a large-scale experimental dataset, on which it is able to automatically detect some mistakes made by the decision-makers, as well as to identify a common source of inconsistency.
% With one cluster, our methodology provides an alternative to the conventional aggregation procedures.

\keywords{Analytic Hierarchy Process (AHP); clustering; decision analysis; large-scale group decision making; pairwise comparison matrix}

\AMS{90-10, 90B50, 91B08}
% Mathematical modeling or simulation for problems pertaining to operations research and mathematical programming
% Management decision making, including multiple objectives
% Individual preferences

\JEL{C38, C44}
% Classification Methods, Cluster Analysis, Principal Components, Factor Models 
% Operations Research, Statistical Decision Theory
\end{abstract}

\clearpage

\section{Introduction} \label{Sec1}

The fast development of information technology has enabled large-scale group decision making (LSGDM): in many decision making problems, the possible number of decision makers (DMs) now easily reaches thousands \citep{Garcia-ZamoraLabellaDingRodriguezMartinez2022}. However, because the cognitive abilities of humans have not evolved parallel to the exponential growth of data, it is necessary to aggregate the opinions of DMs \citep{TangLiao2021, FanLiangPedryczDong2024}. For this purpose, one of the most widely used techniques is \emph{clustering}: allocating the individual judgements into groups such that judgements in the same group (called a cluster) are more similar to each other than to those in other groups (clusters). 

The Analytic Hierarchy Process (AHP) \citep{Saaty1977, Saaty1980} is a popular multi-criteria decision making (MCDM) methodology, hence, solving group AHP (GAHP) models is almost as old as AHP itself \citep{AczelSaaty1983}. Traditionally, there are two main approaches to aggregating individual preferences and creating a group consensus:
(a) aggregating the individual pairwise comparison matrices (PCMs) and deriving priorities from the common matrix \citep{AczelSaaty1983}; and
(b) deriving priorities from the individual PCMs and aggregating these priorities \citep{BasakSaaty1993}.
\citet{OssadnikSchinkeKaspar2016} recommend the second option as no other aggregation technique satisfies a comparable number of evaluation criteria. However, this approach is sensitive to extreme opinions, the supposed consensus may not accurately portray the true group preference \citep{AmentaIshizakaLucadamoMarcarelliVyas2020}. Furthermore, \citet{DulebaSzadoczki2022} find that distance-based aggregation outperforms conventional methods for a high number of DMs.

Even though cluster analysis of rankings or preferences is sometimes used in the literature, we think the topic deserves further attention. \citet{AlbanoGarcia-LaprestaPlaiaSciandra2024} apply a clustering approach based on preference relations. However, the preferences in this study are always consistent and not expressed via PCMs. A more closely related research field is clustering for fuzzy preference relations and consensus reaching \citep{Garcia-LaprestaPerez-Roman2016, KamisChiclanaLevesley2018, MengTangAn2023, WuXu2018, XuZhongChenZhou2015, ZhangDongHerrera-Viedma2018}.
However, none of these papers use (dis)similarity measures proposed for multiplicative PCMs as the basis of clustering.
%However, these papers usually consider PCMs in an Euclidean space and use standard clustering algorithms (see, for example, \citet{WuXu2018}). This solution is not appropriate for multiplicative PCMs, thus, our clustering algorithms will adopt (dis)similarity measures proposed for these matrices.}

On the other hand, we do not know of any studies where individual PCMs are clustered in order to aggregate them. The current paper aims to fill this research gap, which is our main contribution to the extant literature. 

The proposed clustering methodology can be used for several purposes:
\begin{itemize}
\item
it might provide an alternative aggregation procedure if the number of clusters is restricted to one;
\item
it gives information on the ability of aggregated preferences to represent the individual preferences;
\item
it makes it possible to detect some mistakes in the original data, which is far from trivial in LSGDM problems;
\item
it can help to identify common sources of inconsistency.
\end{itemize}
In addition, a crucial advantage of our clustering method is the lack of any requirement on the number of missing entries in the PCMs, which can be especially useful in LSGDM problems where the underlying pairwise comparisons are obtained via a questionnaire that can be finished at every point \citep{FanLiangPedryczDong2024}. Naturally, a complete PCM will have a stronger effect on clustering, but this seems to be a fair compensation for the DMs who have devoted more effort and time to reveal their preferences.

The proposed clustering approach guarantees that the cluster centres are PCMs given by at least one DM. This is also an attractive property because a cluster centre is difficult to interpret if it strongly differs from individual opinions in a MCDM problem.

Last but not least, the suggested $k$-medoids clustering has a linear programming (LP) formulation, which allows for imposing additional restrictions. For example, the inconsistency of the cluster centres can be required to remain below an exogenous threshold.

The theoretical framework is applied to the experimental data of \citet{BozokiDezsoPoeszTemesi2013}.

The paper is structured as follows.
Section~\ref{Sec2} presents the main concepts related to pairwise comparison matrices and examines some dissimilarity measures suggested for them.
Section~\ref{Sec3} introduces cluster analysis and shows the LP formulation of the $k$-medoids clustering.
The numerical results are presented in Section~\ref{Sec4}.
Section~\ref{Sec5} discusses the incorporation of demographic factors that has been suggested by an anonymous reviewer.
Finally, Section~\ref{Sec6} offers concluding remarks.

%Clustering method is not present in AHP methdology. We know two exceptions i) \citet{BENITEZ201898} considers the case where there are quite many of alternatives and cluster them according the values in a pairwise comparison matrix ii) \citet{DulebaSzadoczki2022} used a distance minimization method for aggregating several pairwise comparison matrices. In our context it is clustering problem with one cluster, although in \lq real' clustering problems the number of clusters is at least 2.

\section{Pairwise comparison matrices and their similarity} \label{Sec2}

First, the theoretical background of using pairwise comparison matrices and their measures of similarity---the basis of any clustering algorithm---are discussed. Section~\ref{Sec21} introduces pairwise comparison matrices, while Section~\ref{Sec22} overviews some measures to quantify the (dis)similarity of pairwise comparison matrices.

\subsection{Pairwise comparison matrices} \label{Sec21}

Pairwise comparison matrices are often used to determine the cardinal preferences of DMs, who answer questions like ``How many times is a criterion more important than another one?'' or ``How many times is a given alternative better than another one with respect to a fixed criterion?'' These pairwise ratios are collected into the $n \times n$ pairwise comparison matrix ${A} = \left[ a_{ij} \right]$, where $a_{ij} > 0$ and $a_{ij} = 1/a_{ji}$ hold for all $1 \leq i,j \leq n$. 
$A$ is called consistent if $a_{ij} a_{jk} = a_{ik}$ is satisfied for all $1 \leq i,j,k \leq n$. 
Otherwise, when cardinal transitivity is violated, the matrix is called inconsistent.

From a practical point of view, it is natural and reasonable to allow for some degree of inconsistency.
Several inconsistency indices have been proposed to measure the contradictions in a pairwise comparison matrix \citep{BozokiRapcsak2008, Brunelli2018, Csato2019b}. In particular, \citet{Saaty1977} has suggested the so-called \emph{inconsistency index} $\mathit{CI}$ in his celebrated theory of AHP, which is an affine transformation of the dominant eigenvalue $\lambda_{\max}$ of the pairwise comparison matrix $A$:
\[
\mathit{CI}(A) = \frac{\lambda_{\max} - n}{n-1}.
\]
This is divided by the random index $RI_n$, the average $\mathit{CI}$ of a large number of randomly generated $n \times n$ pairwise comparison matrices to obtain the \emph{inconsistency ratio} $\mathit{CR}(A) = \mathit{CI}(A) / RI_n$. According to Saaty, $\mathit{CR}$ should remain below 0.1 to accept the matrix as a reasonable representation of consistent preferences.
 
Analogously, many weighting methods are available to find a positive weight vector $\mathbf{w} = (w_1, w_2, \dots , w_n)$ such that the ratios $w_i/w_j$ give good approximations to the pairwise comparisons $a_{ij}$ \citep{ChooWedley2004}.
%The Eigenvector Method proposed by Saaty \citep{Saaty1980} builds on the right eigenvector corresponding  to the maximal eigenvalue ($\lambda_{\max}$) of matrix ${A}$. An inconsistency index ($CR$) is also associated, it is a positive linear transformation of $\lambda_{\max}$. 
The \emph{Logarithmic Least Squares Method} ($\mathit{LLSM}$) \citep{CrawfordWilliams1985, Csato2019a, DeGraan1980, Fichtner1986, Rabinowitz1976} is one of the most popular:
\begin{align*}
\min \sum \limits_{i,j} & \left[\log a_{ij} -\log\left(\frac{w_{i}}{w_{j}}\right)\right]^2 \\
\sum\limits_{i=1}^{n}w_{i} & = 1,   \\
w_{i} & > 0, \qquad i=1,2,\dots ,n.
\end{align*}
The unique solution to this optimisation problem is given by the geometric means of the entries in the rows:
\[
\frac{w_i}{w_j} = \frac{\sqrt[n]{\prod_{k=1}^n a_{ik}}}{\sqrt[n]{\prod_{k=1}^n a_{jk}}}.
\]
Therefore, it is often called the \emph{geometric mean} method.

\subsection{Quantifying the similarity of pairwise comparison matrices} \label{Sec22}

Several ideas exist in the literature for computing the (dis)similarity of two pairwise comparison matrices $A$ and $B$. \citet[p.~389]{CrawfordWilliams1985} essentially introduces the following metric:
\[
D_1(A,B) = \sqrt{\left(\sum_{i=1}^n\sum_{j=1}^n \left( \log(a_{ij})-\log(b_{ij})\right)^2 \right)}.
\]
The original definition takes the sum only for the entries above the diagonal, but this is equivalent to $D_1$ except for a constant factor.
Obviously, $D_1$ is symmetric and equals 0 if and only if $A=B$. It also satisfies the triangle inequality \citep{CrawfordWilliams1985}.

\citet{TekileBrunelliFedrizzi2023} measure the closeness of two complete pairwise comparison matrices by another formula called Manhattan or $L^1$ distance:
\[
D_2(A,B) = \sum_{i=1}^n\sum_{j=1}^n \lvert \log(a_{ij})-\log(b_{ij}) \rvert.
\]
$D_2$ also satisfies the triangle inequality.

\citet{KulakowskiMazurekStrada2022} use another indicator, the so-called compatibility index, which was originally defined by \citet{Saaty2008}:
\[
D_{3u}(A,B) = \frac{1}{n^2} \left( \sum_{i=1}^{n}\sum_{j=1}^n a_{ij}b_{ji} \right).
\]
While $D_{3u}$ is symmetric, $D_{3u}(A,B) \geq n^2$. Hence, it is worth normalising $D_{3u}$ as has been done in \citet{AgostonCsato2024}:
\[
D_3(A,B) = \frac{1}{n^2}\left(\sum_{i=1}^{n}\sum_{j=1}^n \left( a_{ij}b_{ji}-1 \right) \right).
\]
$D_3$ remains symmetric, and $D_3(A,B) = 0$ if and only if $A=B$. However, $D_3$ is not a distance.

\begin{lemma} \label{Lemma1}
Dissimilarity index $D_3$ does not satisfy the triangle inequality.
\end{lemma}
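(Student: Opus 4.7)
The plan is to disprove the triangle inequality by exhibiting an explicit counterexample, since a single triple $(A,B,C)$ with $D_3(A,C) > D_3(A,B)+D_3(B,C)$ suffices. Because the inequality is a universally quantified statement over all PCMs, I would look for the smallest and most tractable case, namely $n=2$, where every PCM is determined by a single positive parameter.

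First I would simplify the formula. For $n=2$, any PCM is of the form $A=\bigl(\begin{smallmatrix}1 & a\\ 1/a & 1\end{smallmatrix}\bigr)$, and a direct computation gives
\[
D_3(A,B) = \frac{1}{4}\left(\frac{a}{b}+\frac{b}{a}-2\right) = \frac{(a-b)^2}{4ab},
\]
where $a$, $b$ are the off-diagonal entries. This closed form makes the verification of any proposed counterexample almost mechanical.

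Next, I would search among geometric progressions, since they make the ``far'' distance $D_3(A,C)$ blow up quadratically while keeping $D_3(A,B)$ and $D_3(B,C)$ equal. Concretely, I would try $a=1$, $b=t$, $c=t^2$, which yields
\[
D_3(A,B) = D_3(B,C) = \frac{(1-t)^2}{4t}, \qquad D_3(A,C) = \frac{(1-t^2)^2}{4t^2} = \frac{(1-t)^2(1+t)^2}{4t^2}.
\]
The triangle inequality would require $(1+t)^2 \le 2t$, i.e.\ $1+t^2 \le 0$, which is impossible for any $t>0$, $t\neq 1$. Thus any such triple violates the inequality.

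To finish, I would fix a concrete instance, say $t=2$, so that $A$, $B$, $C$ have off-diagonal entries $1$, $2$, $4$ respectively. Substitution gives $D_3(A,B)=D_3(B,C)=1/8$ and $D_3(A,C)=9/16$, and since $9/16 > 1/8+1/8 = 1/4$, the triangle inequality fails. There is no real obstacle here: the only modest task is recognising that a geometric progression makes the two ``short'' distances equal and the ``long'' distance scale worse than linearly, which is exactly what breaks the inequality.
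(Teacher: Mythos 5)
Your proof is correct and takes essentially the same route as the paper: an explicit counterexample in which the matrices differ only in a single comparison and its reciprocal (the paper uses $3\times 3$ matrices with that entry equal to $2,3,4$; you use $2\times 2$ matrices with entries $1,2,4$). Your parametric computation additionally shows that every geometric progression of off-diagonal values gives a violation, but the essence of the argument — a direct counterexample verified by elementary arithmetic — is the same.
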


\begin{proof}
It is sufficient to give a counterexample. Consider the following three pairwise comparison matrices:
\[
A =
\left[
\begin{array}{K{1.5em}K{1.5em}K{1.5em}}
1&2&1\\
1/2&1&1\\
1&1&1\\
\end{array}
\right], \qquad
B =
\left[
\begin{array}{K{1.5em}K{1.5em}K{1.5em}}
1&3&1\\
1/3&1&1\\
1&1&1\\
\end{array}
\right], \qquad
C =
\left[
\begin{array}{K{1.5em}K{1.5em}K{1.5em}}
1&4&1\\
1/4&1&1\\
1&1&1\\
\end{array}
\right].
\]
It can be checked that $D_3(A,B) = 1/6$, $D_3(B,C) = 1/12$, but $D_3(A,C) = 1/2 > 1/4 = D_3(A,B) + D_3(B,C)$.
\end{proof}

\citet{KulakowskiMazurekStrada2022} consider three other versions of the compatibility index (all of them are modified to ensure $D(A,A)=0$ and $D(A,B) \geq 0$):
\[
D_4(A,B) = \frac{2}{n(n-1)} \left( \sum_{i=1}^{n-1} \sum_{j=i+1}^n \max \left\{ a_{ij}b_{ji},a_{ji}b_{ij} \right\} -1 \right);
\]
\[
D_5(A,B)= \max \left\{ a_{ij}b_{ji} -1: 1 \leq i,j \leq n \right\};
\]
\[
D_6(A,B) = -\frac{2}{n(n-1)} \left( \sum_{i=1}^{n-1} \sum_{j=i+1}^n \min \left\{ a_{ij}b_{ji},a_{ji}b_{ij} \right\} -1 \right).
\]
Based on $D_4$--$D_6$, another reasonable index could be
\[
D_7(A,B) = -\min \left\{ a_{ij}b_{ji} -1: 1 \leq i,j \leq n \right\}.
\]

%Measure $D_4$ takes nonpositive values, it is not appropriate for out purpose. Measures \(D_3\) and \(D_5\) are symmetric, invariant to relabeling but none of them satisfy the triangle inequality (previous matrices \(A\), \(B\) and \(C\) is a good counter example).

\begin{lemma}
Indices $D_4$ and $D_5$ do not satisfy the triangle inequality.
\end{lemma}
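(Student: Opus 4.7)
The plan is to proceed exactly as in the proof of Lemma~\ref{Lemma1}: exhibit a triple of pairwise comparison matrices that witnesses the failure of the triangle inequality, preferably a single triple that works for both $D_4$ and $D_5$ simultaneously. The natural first attempt is to reuse the three $3 \times 3$ matrices $A, B, C$ from Lemma~\ref{Lemma1}, in which only the $(1,2)$ entry varies (2, 3, 4, with the reciprocal $(2,1)$ entries) and every other off-diagonal entry equals 1. The structural reason to expect success is that, for any off-diagonal pair $\{i,j\} \neq \{1,2\}$, the products $a_{ij}b_{ji}$ and $a_{ji}b_{ij}$ all equal 1, so they contribute nothing to the max in $D_5$ nor to the summand $\max\{\cdot,\cdot\}-1$ in $D_4$; only the $\{1,2\}$ pair is active.

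With this reduction, the verification is a short arithmetic computation. For $D_5$, in each of $D_5(A,B)$, $D_5(B,C)$, $D_5(A,C)$ the maximum is attained at the $(2,1)$ index, producing values $3/2 - 1$, $4/3 - 1$, $2 - 1$, respectively; a direct check gives $1 > 1/2 + 1/3$. For $D_4$, after applying the normalisation $2/[n(n-1)] = 1/3$ and reading the index so that $D_4(A,A) = 0$ (i.e.\ interpreting the $-1$ as part of each summand, as is forced by the parenthetical remark in the text), the three values reduce to the single non-trivial pair contribution and satisfy $D_4(A,C) = 1/3 > 1/6 + 1/9 = D_4(A,B) + D_4(B,C)$.

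The only real obstacle is ensuring that the normalisation convention in $D_4$ is unambiguous; the formula as displayed admits a reading in which the $-1$ sits outside the double sum, which would make $D_4(A,A) \neq 0$. I would therefore flag this convention explicitly (following the parenthetical clarification ``$D(A,A)=0$'') before computing. Beyond that bookkeeping point, there is no conceptual difficulty: a single multiplicatively spaced family of perturbations along one entry suffices to break subadditivity for both $D_4$ and $D_5$, and the same spirit of counterexample used for $D_3$ carries over almost verbatim.
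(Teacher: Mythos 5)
Your proposal is correct and follows essentially the same route as the paper: it reuses the triple $A,B,C$ from Lemma~\ref{Lemma1}, and your computed values $D_4(A,B)=1/6$, $D_4(B,C)=1/9$, $D_4(A,C)=1/3$ and $D_5(A,B)=1/2$, $D_5(B,C)=1/3$, $D_5(A,C)=1$ match the paper's exactly. Your remark on reading the $-1$ in $D_4$ as part of each summand (so that $D_4(A,A)=0$) is the intended convention and does not change the argument.
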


\begin{proof}
The PCMs from the proof of Lemma~\ref{Lemma1} can be used:
$D_4(A,B) = 1/6$, $D_4(B,C) = 1/9$ but $D_4(A,C) = 1/3 > 5/18 = D_4(A,B) + D_4(B,C)$, and 
$D_5(A,B) = 1/2$, $D_5(B,C) = 1/3$ but $D_5(A,C) = 1 > 5/6 = D_5(A,B) + D_5(B,C)$.
\end{proof}

\begin{proposition}
Indices $D_6$ and $D_7$ satisfy the triangle inequality, thus, they are distances on the set of pairwise comparison matrices.
\end{proposition}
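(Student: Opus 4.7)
The plan is to reduce both triangle inequalities to a single pointwise subadditivity property. First, I would introduce the auxiliary function $f:(0,\infty)\to[0,1)$ defined by $f(t)=1-\min\{t,1/t\}$, which equals $1-t$ on $(0,1]$ and $1-1/t$ on $[1,\infty)$. Since $a_{ji}b_{ij}$ is the reciprocal of $a_{ij}b_{ji}$, the quantity $1-\min\{a_{ij}b_{ji},a_{ji}b_{ij}\}$ equals $f(a_{ij}b_{ji})$, and likewise $1-\min_{i,j} a_{ij}b_{ji}=\max_{i,j} f(a_{ij}b_{ji})$. Consequently $D_6(A,B)$ is a positive multiple of the sum of the values $f(a_{ij}b_{ji})$ taken over pairs $i<j$, while $D_7(A,B)$ is their maximum.

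The second ingredient is the multiplicative identity
\[
a_{ij}c_{ji}=a_{ij}\,(b_{ji}b_{ij})\,c_{ji}=(a_{ij}b_{ji})(b_{ij}c_{ji}),
\]
which uses $b_{ij}b_{ji}=1$. Thus, once the subadditivity
\[
f(xy)\leq f(x)+f(y)\quad\text{for all }x,y>0
\]
is established, the entry-wise bound $f(a_{ij}c_{ji})\leq f(a_{ij}b_{ji})+f(b_{ij}c_{ji})$ follows immediately. Summing over pairs $i<j$ delivers the triangle inequality for $D_6$, while taking the maximum delivers it for $D_7$ (the maximum of a sum is bounded by the sum of the maxima).

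The heart of the argument is the subadditivity of $f$ under multiplication, which I would prove by a case split according to whether each of $x$ and $y$ lies in $(0,1]$ or $[1,\infty)$. In the two ``same-side'' cases, a direct computation yields the non-negative factorisations $f(x)+f(y)-f(xy)=(1-x)(1-y)$ when $x,y\leq 1$ and $f(x)+f(y)-f(xy)=(1-1/x)(1-1/y)$ when $x,y\geq 1$. The mixed case $x\leq 1\leq y$ requires a further subdivision depending on the sign of $xy-1$; each branch reduces, after clearing denominators, to a factorisation such as $(y-1)(x+1/y)\geq 0$ or $(1-x)(xy+1)\geq 0$. This case analysis is the main obstacle: the inequality holds by routine algebra, but each of the four branches must be checked to ensure that nothing is missed, especially in the mixed regime where $f$ changes form.

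Finally, I would verify the remaining metric axioms. Symmetry follows from $f(t)=f(1/t)$, non-negativity from $f\geq 0$, and definiteness from the fact that $f(t)=0$ if and only if $t=1$, so $D_6(A,B)=0$ or $D_7(A,B)=0$ forces $a_{ij}b_{ji}=1$ for every $i,j$, i.e., $A=B$.
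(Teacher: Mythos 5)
Your argument is correct and in substance the same as the paper's: both reduce the triangle inequality for $D_6$ and $D_7$ to an entrywise inequality and settle it by elementary case-by-case algebra. Your subadditivity lemma $f(xy)\le f(x)+f(y)$ for $f(t)=1-\min\{t,1/t\}$, applied to $x=a_{ij}b_{ji}$ and $y=b_{ij}c_{ji}$, is precisely the content of the paper's three displayed inequalities under its normalisation $1=a_{ij}\le b_{ij}\le c_{ij}$, just packaged as a single statement, and your summation/maximum step and check of the remaining metric axioms complete the proof in the same way.
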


\begin{proof}
For both $D_6$ and $D_7$, it is sufficient to show that the triangle inequality holds elementwise for any pairwise comparison matrices $A = \left[ a_{ij} \right]$, $B = \left[ b_{ij} \right]$, and $C = \left[ c_{ij} \right]$. $1 = a_{ij} \leq b_{ij} \leq c_{ij}$ can be assumed without loss of generality. Then, by elementary calculus, the following three inequalities hold:
\[
1- \frac{a_{ij}}{b_{ij}} \quad + \quad 1 - \frac{b_{ij}}{c_{ij}}  \quad \geq \quad 1 - \frac{a_{ij}}{c_{ij}};
\]
\[
1- \frac{a_{ij}}{c_{ij}} \quad + \quad 1 - \frac{b_{ij}}{c_{ij}} \quad \geq \quad 1 - \frac{a_{ij}}{b_{ij}};
\]
\[
1- \frac{a_{ij}}{b_{ij}} \quad + \quad 1 - \frac{a_{ij}}{c_{ij}} \quad \geq \quad 1 - \frac{b_{ij}}{c_{ij}}.
\]
\end{proof}

\citet{Fichtner1984} verifies that the popular eigenvector method \citep{Saaty1977} can be defined by a metric, too. However, although it satisfies the requirements of a distance, it is discontinuous.

%It is also a possibility to consider the elements of the pairwise comparison matrix as point in a Euclidean space of dimension \(n^2\), but this measure do not consider the special properties of the pairwise comparison matrices.

All of the above dissimilarity measures $D_1$--$D_7$ are invariant under the relabeling of the alternatives. Consequently, they are invariant to transposition, that is,
\[
D_i(A,B) = D_i(A^\top,B^\top),
\]
where $A^\top$ is the transpose of the pairwise comparison matrix $A$.

Since the input of the $k$-medoids problem is the dissimilarity matrix of the objects, any of the indices $D_1$--$D_7$ can be used in the proposed methodology. Our numerical experiment will consider $D_1$ and $D_3$.

\section{Cluster analysis and the $k$-medoids problem} \label{Sec3}

Clustering, the problem of dividing a set of objects into groups (clusters) such that any object is more similar to an arbitrary object in its group than to any object in a distinct group, is one of the most popular methods in exploratory data science. Although no ``perfect'' clustering algorithm exists, perhaps the most natural solution is $k$-means clustering, when each object belongs to the cluster with the nearest mean.

However, this leads to an NP-hard non-convex discrete optimisation problem.
Therefore, most state-of-the-art statistical and data science software implement a heuristic iterative algorithm that alternates two steps: i) assigning all points to the nearest cluster centre; and ii) recalculating the cluster centres.
The operation in step ii) is quite trivial in Euclidean spaces (take the average in every dimension), but it may be challenging if the objects do not belong to a multidimensional Euclidean space \citep{MajstorovicSaboJungKlaric2018}.
Furthermore, the heuristic iterative algorithm above can converge to a local rather than global optimum. Hence, \citet{AgostonEisenberg-Nagy2024} provide a mixed integer linear programming (MILP) formulation to obtain an exact solution for the minimum sum-of-clustering problem that allows for adding arbitrary linear constraints and can be solved up to 150 objects.

An alternative approach to $k$-means clustering is the $k$-medoids problem \citep{KaufmanRousseeuw1986, SchubertRousseeuw2019}. Here, all cluster centres should be objects themselves, allowing for easier interpretability of the cluster centres. This seems to be an important advantage in LSGDM since the DMs may not be willing to accept a solution that has not been suggested by any of them. In addition, $k$-medoids can be used with an arbitrary dissimilarity measure, while $k$-means generally requires Euclidean distance. Last but not least, minimising the sum of pairwise dissimilarities instead of the sum of squared Euclidean distances makes the $k$-medoids problem less sensitive to outliers than the $k$-means problem.

For unknown reasons, the $k$-medoids model is not widely used and is not implemented in usual statistical software packages. Nonetheless, the problem can be formulated as an LP model, which is described in the following.

%\subsection{An LP formulation of the $k$-medoids clustering problem} \label{Sec24}

Assume that the $m \times m$ matrix $\Delta = \left[ \delta_{ij} \right]$ contains the (pairwise) dissimilarities between any two of the $m$ objects (individual PCMs). For example, $\delta_{ij} = D_1 \left( A^{(i)}, A^{(j)} \right)$ if measure $D_1$ is used to determine the dissimilarity of the preferences given by matrices $A^{(i)}$ and $A^{(j)}$ of decision makers $i$ and $j$. Let $M$ denote the set of integers $\{1, \dots, m \}$.

The $k$-medoids problem is as follows (see, for instance, \citet{Vinod1969}):
\begin{align}
\sum_{i=1}^m \sum_{j=1}^m \delta_{ij}x_{ij} & \rightarrow \min &  \label{eq_MSSof}\\
% \mbox{s.t.}\hspace{30mm}&& \nonumber \\
\mbox{s.t.} \qquad \sum_{j=1}^m x_{ij} & = 1& \forall\; i \in M \label{eq_KMedC1}\\
x_{ij}&\le y_j& \forall\; i,j \in M \label{eq_KMedC2}\\
\sum_{j=1}^m y_j & = k &  \label{eq_KMedC3}\\
x_{ij}& \ge  0& \forall\; i,j \in M \label{eq_KMedC4}\\
y_{j}&\in\{0,1\}& \forall\; j \in M \label{eq_KMedC5}
\end{align}
In this formulation, the binary variable $y_j$ equals one if object $j$ is a cluster centre (and zero otherwise), while $x_{ij}$ equals one if object $i$ is placed in the cluster whose centre is object $j$ (and zero otherwise). Constraint \eqref{eq_KMedC3} ensures that there are $k$ different cluster centres. According to \eqref{eq_KMedC1}, each object is classified into exactly one cluster. Finally, due to the constraints \eqref{eq_KMedC2}, object $i$ can be associated with object $j$ ($x_{ij}=1$) only if object $j$ is a cluster centre, namely, $y_j=1$.

The LP model \eqref{eq_MSSof}--\eqref{eq_KMedC5} can be solved with standard solvers in small and medium size instances.
The above formulation allows any PCM to be a cluster centre. However, further restrictions can be easily added to the LP model, for example, by requiring that the inconsistency ratio $\mathit{CR}$ of each cluster centre is below an exogenous threshold.

Originally, clustering was intended to form groups in Euclidean spaces, but the underlying idea can be applied in other fields. Clustering algorithms appear in network science, where the goal is to classify nodes in a graph such that the nodes in a given cluster are connected more strongly to each other than to nodes in other clusters. Other objects can also be clustered, for example, curves or mortality tables. 

To conclude, cluster analysis requires a clustering algorithm and a dissimilarity measure, which is not necessarily a distance. This paper considers the $k$-medoids problem due to its advantages discussed above.
%In addition, it would be challenging to find an arbitrary pairwise comparison matrix as a cluster center from which the sum of deviations is minimal.

\section{Numerical results} \label{Sec4}

\begin{table}[t!]
\centering
\caption{Sample sizes in the experimental database}
\label{Table1}
    \rowcolors{1}{}{gray!20}
\begin{tabularx}{\textwidth}{CcCc} \toprule
Label & Type & Size ($n$) & Number of PCMs ($m$) \\ \bottomrule
M4 & Objective (map) & 4 &  66\\
M6 & Objective (map) & 6 &  77\\
M8 & Objective (map) & 8 &  82\\
S4 & Subjective (summer house) & 4 &  68\\
S6 & Subjective (summer house) & 6 &  77\\
S8 & Subjective (summer house) & 8 &  78\\ \bottomrule
\end{tabularx}
\end{table}

\citet{BozokiDezsoPoeszTemesi2013} have collected hundreds of pairwise comparison matrices in a controlled experiment where even the questioning order is known for each matrix. However, here we use only the final complete pairwise comparison matrices. The matrices are distinguished by their size ($n=4$, $n=6$, $n=8$ alternatives) and type (objective, where the students compared maps; subjective, where the students compared summer houses). The sample sizes are reported in Table~\ref{Table1}.

\subsection{Analysing a sample of objective type} \label{Sec41}

In the case of dataset M4, the ``correct'' consistent pairwise comparison matrix provided by the ratios of country areas that appear on the map are known:
\[
\left[
\begin{array}{cccc}
1 &	1.691&	0.282&	0.770\\
0.591&	1 &	0.167&	0.455\\
3.544&	5.991&	1 &	2.725\\
1.300&	2.198&	0.367&	1
\end{array}
\right].
\]
For $k=2$ clusters, the cluster centres coincide for both dissimilarity measures $D_1$ and $D_3$:
\[
CC_1^{(1)} = CC_3^{(1)} =
\left[
\begin{array}{cccc}
1 &	1.500&	0.286&	0.833\\
0.667&	1 &	0.154&	0.435\\
3.500&	6.500&	1 &	2.300\\
1.200&	2.300&	0.435&	1 
\end{array}
\right],
\]
and
\[
CC_1^{(2)} = CC_3^{(2)} =
\left[
\begin{array}{cccc}
1 &	1.200&	0.400&	0.909\\
0.833&	1 &	0.200&	0.500\\
2.500&	5.000&	1 &	2.300\\
1.100&	2.000&	0.500&	1 
\end{array}
\right].
\]
The two clusters are the same: 47 matrices belong to first and 19 to the second group.
%Since it is a quantitative exercise the values itself do not surprise us.

\begin{figure}[t!]
\centering

\begin{tikzpicture}

\draw[very thick](-0.5,0) -- (5.5,0);

\draw[->, very thick](0,-0.5) -- (0,5.5);

\draw[very thick](-0.1,1.43) -- (0.1,1.43);
\node at (-0.5,1.43) {\footnotesize{0.01}};
\draw[very thick](-0.1,2.86) -- (0.1,2.86);
\node at (-0.5,2.86) {\footnotesize{0.02}};
\draw[very thick](-0.1,4.29) -- (0.1,4.29);
\node at (-0.5,4.29) {\footnotesize{0.03}};

\filldraw[fill=blue!25]
(1,0.34) -- (1,1.52) -- (2,1.52) --  (2,0.34) -- (1,0.34);

\draw[very thick](1,0.69) -- (2,0.69);

\draw[dashed] (1.5,0.05)--(1.5,0.34);
\draw (1.2,0.05)--(1.8,0.05);

\draw[dashed] (1.5,1.52)--(1.5,3.06);
\draw (1.2,3.06)--(1.8,3.06);

\draw (1.45,4.80)--(1.55,4.9);
\draw (1.55,4.80)--(1.45,4.9);

%\draw (1.5,0)--(1.5,-0.2);
\node at (1.5,-0.5) {\footnotesize{Cluster \#1}};

\filldraw[fill=blue!25]
(4,0.48) -- (4,1.24) -- (5,1.24) --  (5,0.48) -- (4,0.48);

\draw[very thick](4,1.08) -- (5,1.08);

\draw[dashed] (4.5,0.05)--(4.5,0.48);
\draw (4.2,0.05)--(4.8,0.05);

\draw[dashed] (4.5,1.21)--(4.5,1.71);
\draw (4.2,1.71)--(4.8,1.71);

\draw (4.45,2.59)--(4.55,2.69);
\draw (4.55,2.59)--(4.45,2.69);

%\draw (4.5,0)--(4.5,-0.2);
\node at (4.5,-0.5) {\footnotesize{Cluster \#2}};
\end{tikzpicture}

\caption{Distribution of inconsistency ratios $\mathit{CR}$, dataset M4, $k=2$ clusters}
\label{Fig1}
\end{figure}
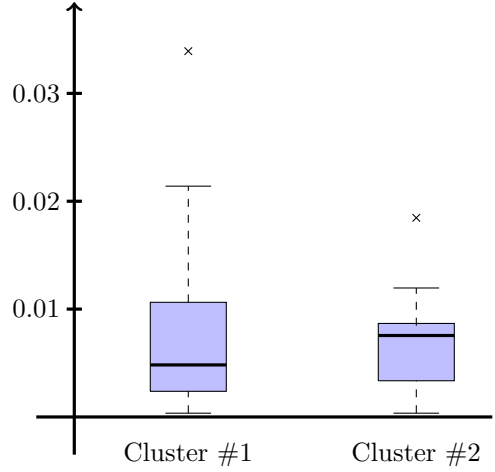

%\end{document} 

Figure~\ref{Fig1} shows boxplots for the inconsistency ratios $\mathit{CR}$ in the two clusters.

\begin{figure}
\centering

\begin{tikzpicture}[scale=2]
\draw[thick] (-3,0)--(2,0);
\draw[thick] (0,-1.5)--(0,2);

\node[draw,circle,scale=0.3,color=red!27.743173036104,fill=red!27.743173036104] at (-0.104582758960714,-0.220236914354549) {};
\node[draw,circle,scale=0.3,color=red!20,fill=red!20] at (-1.63196231362193,-1.16585908230453) {};
\node[draw,circle,scale=0.3,color=red!25.0009005547447,fill=red!25.0009005547447] at (-0.635920195171941,-0.000846673345919702) {};
\node[draw,circle,scale=0.3,color=red!28.1701883564313,fill=red!28.1701883564313] at (0.0312937108696674,-0.547533312478262) {};
\node[draw,circle,scale=0.3,color=red!21.5400465734548,fill=red!21.5400465734548] at (-0.573045426326024,-0.301791543521814) {};
\node[draw,circle,scale=0.3,color=red!70.3742776343295,fill=red!70.3742776343295] at (0.0362815107874781,-0.0339235070441709) {};
\node[draw,circle,scale=0.3,color=red!22.5681911326754,fill=red!22.5681911326754] at (-0.0359057205065555,0.258771589619418) {};
\node[draw,circle,scale=0.3,color=red!32.8922348708001,fill=red!32.8922348708001] at (-0.186703935219404,-0.163481670083828) {};
\node[draw,circle,scale=0.3,color=red!48.9415457975426,fill=red!48.9415457975426] at (-0.766386756936899,0.494497402862245) {};
\node[draw,circle,scale=0.3,color=red!52.2050937575925,fill=red!52.2050937575925] at (-0.0987671515162197,0.0139819093486296) {};
\node[draw,circle,scale=0.3,color=red!35.1304297549793,fill=red!35.1304297549793] at (-0.24875129647102,-0.283226237909117) {};
\node[draw,circle,scale=0.3,color=red!52.2165963874199,fill=red!52.2165963874199] at (0.0461811761688024,-1.24516275853874) {};
\node[draw,circle,scale=0.3,color=red!45.6981272170028,fill=red!45.6981272170028] at (-0.0811214955217393,-0.278184425835227) {};
\node[draw,circle,scale=0.3,color=red!100,fill=red!100] at (-0.842834943959493,1.80090563228448) {};
\node[draw,circle,scale=0.3,color=red!28.7123616616373,fill=red!28.7123616616373] at (0.0683191667579885,0.00152595558172792) {};
\node[draw,circle,scale=0.3,color=red!29.084844041208,fill=red!29.084844041208] at (-0.481511075517613,-0.142773297516115) {};
\node[draw,circle,scale=0.3,color=red!25.384197169623,fill=red!25.384197169623] at (-0.283138937177438,-0.00709627691763844) {};
\node[draw,circle,scale=0.3,color=red!28.3167787968792,fill=red!28.3167787968792] at (-0.158654809676147,0.184849851914826) {};
\node[draw,circle,scale=0.3,color=red!36.3636666740629,fill=red!36.3636666740629] at (-2.64758082036684,0.390824590478015) {};
\node[draw,circle,scale=0.3,color=red!20.0560485123925,fill=red!20.0560485123925] at (-0.784569960941716,-0.0193533156809578) {};
\node[draw,circle,scale=0.3,color=red!66.3916130421572,fill=red!66.3916130421572] at (-0.117473648919792,-0.231955678597323) {};
\node[draw,circle,scale=0.3,color=red!23.0840409684335,fill=red!23.0840409684335] at (-0.176919800500813,-0.239383190357657) {};
\node[draw,circle,scale=0.3,color=red!28.3981807755842,fill=red!28.3981807755842] at (-0.125113279431071,0.184202640743869) {};
\node[draw,circle,scale=0.3,color=red!41.5609204288983,fill=red!41.5609204288983] at (-0.484056769522291,0.0495359315574239) {};
\node[draw,circle,scale=0.3,color=red!57.7613364551527,fill=red!57.7613364551527] at (-0.202958203956108,0.0403846304982148) {};
\node[draw,circle,scale=0.3,color=red!35.7407428926191,fill=red!35.7407428926191] at (0.158534576877362,0.221999100903998) {};
\node[draw,circle,scale=0.3,color=red!25.6829218685413,fill=red!25.6829218685413] at (0.102261163790481,-0.077997259751059) {};
\node[draw,circle,scale=0.3,color=red!40.2340414596807,fill=red!40.2340414596807] at (-0.0279056975006285,0.00724571671139551) {};
\node[draw,circle,scale=0.3,color=red!28.9075087053951,fill=red!28.9075087053951] at (-0.525466270694212,0.292247330963276) {};
\node[draw,circle,scale=0.3,color=red!27.310853054461,fill=red!27.310853054461] at (-0.207717390256928,-0.0371336643339072) {};
\node[draw,circle,scale=0.3,color=red!25.184311053893,fill=red!25.184311053893] at (-0.124554573508315,0.085929567446761) {};
\node[draw,circle,scale=0.3,color=red!58.9436438749534,fill=red!58.9436438749534] at (-0.0141600209481007,0.213111913840607) {};
\node[draw,circle,scale=0.3,color=red!28.1858387831695,fill=red!28.1858387831695] at (-0.21494973536453,-0.0944140424359473) {};
\node[draw,circle,scale=0.3,color=red!34.4358547042338,fill=red!34.4358547042338] at (-0.205300290926322,0.123690475352242) {};
\node[draw,circle,scale=0.3,color=red!47.963375906632,fill=red!47.963375906632] at (0.0247129064717145,-0.305324187173667) {};
\node[draw,circle,scale=0.3,color=red!62.5105132098779,fill=red!62.5105132098779] at (0.0927389014234722,-0.391502571242273) {};
\node[draw,circle,scale=0.3,color=red!25.0212007136945,fill=red!25.0212007136945] at (0.138527537948931,0.194740716674971) {};
\node[draw,circle,scale=0.3,color=red!34.5119703222414,fill=red!34.5119703222414] at (-0.28318796801274,-0.200902533282209) {};
\node[draw,circle,scale=0.3,color=red!37.9033584920585,fill=red!37.9033584920585] at (-0.14427232319798,0.112599497906529) {};
\node[draw,circle,scale=0.3,color=red!30.4745515233588,fill=red!30.4745515233588] at (0.0673277482947958,-0.389013959013151) {};
\node[draw,circle,scale=0.3,color=red!21.5400465538363,fill=red!21.5400465538363] at (-0.262545744718808,0.18604758609582) {};
\node[draw,circle,scale=0.3,color=red!44.3276766407232,fill=red!44.3276766407232] at (-0.0438327816752776,0.096394750717656) {};
\node[draw,circle,scale=0.3,color=red!47.2475155101648,fill=red!47.2475155101648] at (-0.874610455668206,-0.674485396559464) {};
\node[draw,circle,scale=0.3,color=red!34.1078735833547,fill=red!34.1078735833547] at (-0.398052651819794,0.0321804710225935) {};
\node[draw,circle,scale=0.3,color=red!25.1144268582074,fill=red!25.1144268582074] at (0.117922609981497,0.749029549008262) {};
\node[draw,circle,scale=0.3,color=red!20.0000000000016,fill=red!20.0000000000016] at (0.133263361047561,-0.00107656045657967) {};
\node[draw,circle,scale=0.6,color=ForestGreen!31.362769464017,fill=ForestGreen!31.362769464017] at (-0.0814399220056556,-0.0172128861565627) {};

\node[draw,rectangle,scale=0.3,color=blue!38.6142749867989,fill=blue!38.6142749867989] at (0.401811111834794,-0.367520128269894) {};
\node[draw,rectangle,scale=0.3,color=blue!29.1360792166344,fill=blue!29.1360792166344] at (0.527609569680219,0.217049069913116) {};
\node[draw,rectangle,scale=0.3,color=blue!38.0508418730004,fill=blue!38.0508418730004] at (0.828758691004628,0.168639581688849) {};
\node[draw,rectangle,scale=0.3,color=blue!63.5047571612574,fill=blue!63.5047571612574] at (0.78359165831749,0.0848983889608239) {};
\node[draw,rectangle,scale=0.3,color=blue!21.2752686389075,fill=blue!21.2752686389075] at (0.84366977989118,0.336633454095644) {};
\node[draw,rectangle,scale=0.3,color=blue!28.1155938082711,fill=blue!28.1155938082711] at (0.655931506672911,0.166556627166551) {};
\node[draw,rectangle,scale=0.3,color=blue!48.1720601185208,fill=blue!48.1720601185208] at (1.09445109077528,0.177944524490718) {};
\node[draw,rectangle,scale=0.3,color=blue!37.294818793277,fill=blue!37.294818793277] at (0.339513038008784,0.403859117803098) {};
\node[draw,rectangle,scale=0.3,color=blue!37.7216955458462,fill=blue!37.7216955458462] at (0.380125193308673,0.188039117983584) {};
\node[draw,rectangle,scale=0.3,color=blue!38.4239724001293,fill=blue!38.4239724001293] at (0.368751956475115,0.214281499458956) {};
\node[draw,rectangle,scale=0.3,color=blue!19.9999999999996,fill=blue!19.9999999999996] at (1.40047846133009,-0.366051373377257) {};
\node[draw,rectangle,scale=0.3,color=blue!36.6359543162581,fill=blue!36.6359543162581] at (0.831844836429085,-0.587996749811936) {};
\node[draw,rectangle,scale=0.3,color=blue!40.2018288979405,fill=blue!40.2018288979405] at (0.622901961769595,0.256444610718295) {};
\node[draw,rectangle,scale=0.3,color=blue!40.5431211961626,fill=blue!40.5431211961626] at (0.169184477269212,0.180827832648823) {};
\node[draw,rectangle,scale=0.3,color=blue!46.1763414616583,fill=blue!46.1763414616583] at (0.490387484342199,0.181158241218207) {};
\node[draw,rectangle,scale=0.3,color=blue!27.304505015092,fill=blue!27.304505015092] at (0.442457416088837,-0.134340510091306) {};
\node[draw,rectangle,scale=0.3,color=blue!27.7463948804279,fill=blue!27.7463948804279] at (1.4230440539301,-0.373135682036155) {};
\node[draw,rectangle,scale=0.3,color=blue!23.3021651048986,fill=blue!23.3021651048986] at (0.88854624127079,0.565302757179672) {};
\node[draw,rectangle,scale=0.6,color=ForestGreen!47.8456038325649,fill=ForestGreen!47.8456038325649] at (0.565532227700522,0.0225837536179185) {};
\end{tikzpicture}

\caption{Approximated coordinates of PCMs, dataset M4, measure $D_3$ \\ \vspace{0.25cm}
\footnotesize{\emph{Notes}: The two clusters are represented by red dots and blue squares. A darker mark is associated with a higher inconsistency. The cluster centers are green and have double sizes.}}
\label{Fig2}
\end{figure}

%\end{document}

We have used multidimensional scaling (MDS) to visualise the pairwise comparison matrices \citep{Kruskal1964, KruskalWish1978}. This technique places each matrix into a lower-dimensional space such that the original distances are preserved to the extent possible.
In Figure~\ref{Fig2}, red dots and blue squares represent the two clusters. A darker mark is associated with a higher inconsistency. The two cluster centres are green and have double sizes. Clearly, the clusters are not determined by the inconsistency of the matrices.
The axes are artificial measures in a two-dimensional Euclidean space without a clear interpretation, as provided by the \texttt{cmdscale()} function of \texttt{R}.
 
The main findings can be summarised as follows:
\begin{itemize}
\item
The clusters are quite balanced with respect to the number of matrices in them;
\item
The results are insensitive to the dissimilarity measure ($D_1$ or $D_3$) used;
\item
The two groups are not distinguished by the level of inconsistency, although the variance of $\mathit{CR}$ is somewhat higher for the first group.
\end{itemize}

The analysis has been repeated with higher numbers of clusters. A strong relationship remains between the resulting groups for different values of $k$, as well as for the two dissimilarity measures $D_1$ and $D_3$.

In sample M8 with measure $D_3$ and $k=2$ clusters, an interesting example has been found that uncovers a possible application of our methodology.
The cluster centres are:
\[
CC_3^{(1)} =
\left[
\begin{array}{cccccccc}
1 & 1.800& 0.769& 0.556& 1.800&  5.000& 1.100& 3.000 \\
0.556& 1 & 0.455& 0.313& 1.100&  2.700& 0.909& 1.500 \\
1.300& 2.200& 1 & 0.833& 3.000&  7.000& 1.800& 3.300 \\
1.800& 3.200& 1.200& 1 & 5.000& \textbf{10.100} & 2.200& 6.000 \\
0.556& 0.909& 0.333& 0.200& 1 &  2.300& 0.625& 1.400 \\
0.200& 0.370& 0.143& 0.099& 0.435&  1 & 0.250& 0.769 \\
0.909& 1.100& 0.556& 0.455& 1.600&  4.000& 1 & 2.200 \\
0.333& 0.667& 0.303& 0.167& 0.714&  1.300& 0.455& 1 \\
\end{array}
\right],
\]
and
\[
CC_3^{(2)} =
\left[
\begin{array}{cccccccc}
1 & 1.500& 0.500&  0.500& 3.000& 6.000& 1.200& 3.000 \\
0.667& 1 & 0.500&  0.333& 1.500& 3.000& 0.667& 1.500 \\
2.000& 2.000& 1 &  0.667& 5.000& 8.000& 1.500& 4.000 \\
2.000& 3.000& 1.500&  1 & 4.500& \textbf{0.100} & 2.000& 5.000 \\
0.333& 0.667& 0.200&  0.222& 1 & 2.000& 0.667& 1.200 \\
0.167& 0.333& 0.125& 10.000& 0.500& 1 & 0.286& 0.500 \\
0.833& 1.500& 0.667&  0.500& 1.500& 3.500& 1 & 2.500 \\
0.333& 0.667& 0.250&  0.200& 0.833& 2.000& 0.400& 1  \\
\end{array}
\right].
\]
The first cluster contains 81 matrices and the second cluster consists of only one. The two cluster centres are similar except for the preference between the fourth and sixth alternatives (countries), highlighted in bold font. Since the pairwise comparisons show the ratio of the area of two countries, the only PCM in the second cluster is almost certain to contain a typo: the student has thought that country 4 is ten times larger than country 6 but mistakenly written the reciprocal 1/10, which is a standard mistake. Unsurprisingly, the associated PCM fundamentally differs from all other matrices, and the presented clustering approach is able to detect the outlier without any other specification. This can be highly advantageous in LSGDM problems.

\subsection{Analysing a sample of subjective type} \label{Sec42} 

When the students have compared summer houses, no ``natural'' PCM exists around which the opinions are centred.

Since sample S4 contains four different houses, we have started the analysis with four clusters, supposing that each alternative will be the best in one cluster.

For the measure $D_1$, the cluster centres are as follows:
\[
\mathit{CC}_1^{(1)} = \left[
\begin{array}{cccc}
1 &	2.000&	7.000&	3.000\\
0.500&	1 &	5.000&	2.000\\
0.143&	0.200&	1 &	0.500\\
0.333&	0.500&	2.000&	1 
\end{array}
\right], \qquad
\mathit{CC}_1^{(2)} = \left[
\begin{array}{cccc}
1 &	1.500&	0.500&	0.333\\
0.667&	1 &	0.333&	0.333\\
2.000&	3.000&	1 &	0.667\\
3.000&	3.000&	1.500&	1 
\end{array}
\right],
\]
\[
\mathit{CC}_1^{(3)} = \left[
\begin{array}{cccc}
1 &	3.000&	5.000&	0.400\\
0.333&	1 &	3.000&	0.143\\
0.200&	0.333&	1 &	0.111\\
2.500&	7.000&	9.000&	1 
\end{array}
\right], \qquad
\mathit{CC}_1^{(4)} = \left[
\begin{array}{cccc}
1 &	6.000&	3.000&	3.000\\
0.167&	1 &	0.200&	0.200\\
0.333&	5.000&	1 &	0.500\\
0.333&	5.000&	2.000&	1 
\end{array}
\right].
\]
On the other hand, for the measure $D_3$:
\[
\mathit{CC}_3^{(1)} = \left[
\begin{array}{cccc}
1 &	2.000&	7.000&	3.000\\
0.500&	1 &	5.000&	2.000\\
0.143&	0.200&	1 &	0.500\\
0.333&	0.500&	2.000&	1 
\end{array}
\right], \qquad
\mathit{CC}_3^{(2)} = \left[
\begin{array}{cccc}
1 &	1.500&	0.333&	0.200\\
0.667&	1 &	0.333&	0.200\\
3.000&	3.000&	1 &	0.500\\
5.000&	5.000&	2.000&	1 
\end{array}
\right].
\]
\[
\mathit{CC}_3^{(3)} = \left[
\begin{array}{cccc}
1 &	4.000&	1.500&	2.000\\
0.250&	1 &	0.333&	0.667\\
0.667&	3.000&	1 &	3.000\\
0.500&	1.500&	0.333&	1 
\end{array}
\right], \qquad
\mathit{CC}_3^{(4)} = \left[
\begin{array}{cccc}
1 &	5.000&	3.000&	1.000\\
0.200&	1 &	0.500&	0.143\\
0.333&	2.000&	1 &	0.200\\
1.000&	7.000&	5.000&	1 
\end{array}
\right].
\]
Note that the centres of the first clusters (matrices $\mathit{CC}_1^{(1)}$ and $\mathit{CC}_3^{(1)}$) coincide. The cluster sizes are 26, 15, 9, 18 for index $D_1$ and 27, 9, 12, 20 for index $D_3$.

\begin{table}[t!]
\centering
\caption{Contingency table for sample S4}
\label{Table2}
\begin{threeparttable}
    \rowcolors{1}{}{gray!20}
\begin{tabularx}{0.6\textwidth}{CCCCCC} \toprule
 & $\mathit{CL}_3^{(1)}$ & $\mathit{CL}_3^{(2)}$ & $\mathit{CL}_3^{(3)}$ & $\mathit{CL}_3^{(4)}$ & $\sum$ \\ \bottomrule
$\mathit{CL}_1^{(1)}$ &   25    & 0     & 1     & 0 & 26 \\
$\mathit{CL}_1^{(2)}$ &    0     & 9     & 5     & 1 & 15 \\
$\mathit{CL}_1^{(3)}$ &    2     & 0     & 0     & 7 & 9 \\
$\mathit{CL}_1^{(4)}$ &    0     & 0     & 6     & 12 & 18 \\ \bottomrule
$\sum$ & 27 & 9 & 12 & 20 & 68 \\ \toprule
\end{tabularx}
\begin{tablenotes} \footnotesize
\item
Four clusters, measures $D_1$ (row) and $D_3$ (column)
\end{tablenotes}
\end{threeparttable}
\end{table}

The contingency table of the two groupings is shown in Table~\ref{Table2}. The similarity of the clusters is lower than in the objective task (Section~\ref{Sec41}), but the largest cluster almost coincides.

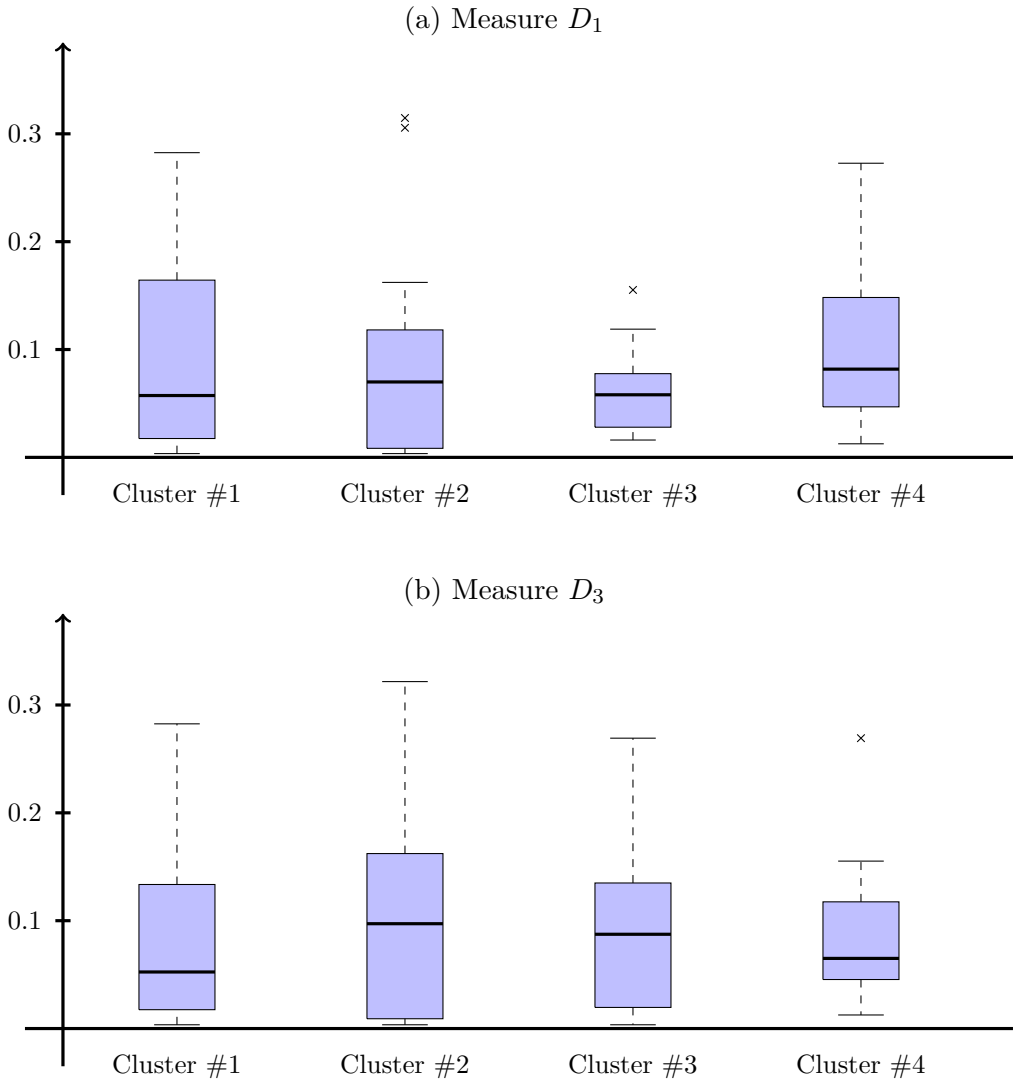
\begin{figure}[t!]
\centering

\begin{subfigure}{\textwidth}
\centering
\caption{Measure $D_1$}
\label{Fig3a}  

\begin{tikzpicture}

\draw[very thick](-0.5,0) -- (12.5,0);

\draw[->, very thick](0,-0.5) -- (0,5.5);

\draw[very thick](-0.1,1.43) -- (0.1,1.43);
\node at (-0.5,1.43) {\footnotesize{0.1}};
\draw[very thick](-0.1,2.86) -- (0.1,2.86);
\node at (-0.5,2.86) {\footnotesize{0.2}};
\draw[very thick](-0.1,4.29) -- (0.1,4.29);
\node at (-0.5,4.29) {\footnotesize{0.3}};

%cluster #1
\filldraw[fill=blue!25]
(1,0.25) -- (1,2.35) -- (2,2.35) --  (2,0.25) -- (1,0.25);

\draw[very thick](1,0.82) -- (2,0.82);

\draw[dashed] (1.5,0.05)--(1.5,0.25);
\draw (1.2,0.05)--(1.8,0.05);

\draw[dashed] (1.5,2.35)--(1.5,4.04);
\draw (1.2,4.04)--(1.8,4.04);

%\draw (1.5,0)--(1.5,-0.2);
\node at (1.5,-0.5) {\footnotesize{Cluster \#1}};

%cluster #2
\filldraw[fill=blue!25]
(4,0.12) -- (4,1.69) -- (5,1.69) --  (5,0.12) -- (4,0.12);

\draw[very thick](4,1.00) -- (5,1.00);

\draw[dashed] (4.5,0.05)--(4.5,0.12);
\draw (4.2,0.05)--(4.8,0.05);

\draw[dashed] (4.5,1.69)--(4.5,2.32);
\draw (4.2,2.32)--(4.8,2.32);

\draw (4.45,4.45)--(4.55,4.55);
\draw (4.55,4.45)--(4.45,4.55);

\draw (4.45,4.32)--(4.55,4.42);
\draw (4.55,4.32)--(4.45,4.42);

%\draw (4.5,0)--(4.5,-0.2);
\node at (4.5,-0.5) {\footnotesize{Cluster \#2}};

%cluster #3
\filldraw[fill=blue!25]
(7,0.4) -- (7,1.11) -- (8,1.11) --  (8,0.4) -- (7,0.4);

\draw[very thick](7,0.83) -- (8,0.83);

\draw[dashed] (7.5,0.23)--(7.5,0.4);
\draw (7.2,0.23)--(7.8,0.23);

\draw[dashed] (7.5,1.11)--(7.5,1.70);
\draw (7.2,1.70)--(7.8,1.70);

%\draw (7.5,0)--(7.5,-0.2);
\node at (7.5,-0.5) {\footnotesize{Cluster \#3}};

\draw (7.45,2.17)--(7.55,2.27);
\draw (7.55,2.17)--(7.45,2.27);

%cluster #4
\filldraw[fill=blue!25]
(10,0.67) -- (10,2.12) -- (11,2.12) --  (11,0.67) -- (10,0.67);

\draw[very thick](10,1.17) -- (11,1.17);

\draw[dashed] (10.5,0.18)--(10.5,0.67);
\draw (10.2,0.18)--(10.8,0.18);

\draw[dashed] (10.5,2.12)--(10.5,3.90);
\draw (10.2,3.90)--(10.8,3.90);

%\draw (10.5,0)--(10.5,-0.2);
\node at (10.5,-0.5) {\footnotesize{Cluster \#4}};

\end{tikzpicture}
\end{subfigure}

\vspace{0.5cm}
\begin{subfigure}{\textwidth}
\centering
\caption{Measure $D_3$}
\label{Fig3b}  

\begin{tikzpicture}

\draw[very thick](-0.5,0) -- (12.5,0);

\draw[->, very thick](0,-0.5) -- (0,5.5);

\draw[very thick](-0.1,1.43) -- (0.1,1.43);
\node at (-0.5,1.43) {\footnotesize{0.1}};
\draw[very thick](-0.1,2.86) -- (0.1,2.86);
\node at (-0.5,2.86) {\footnotesize{0.2}};
\draw[very thick](-0.1,4.29) -- (0.1,4.29);
\node at (-0.5,4.29) {\footnotesize{0.3}};

%cluster #1
\filldraw[fill=blue!25]
(1,0.25) -- (1,1.91) -- (2,1.91) --  (2,0.25) -- (1,0.25);

\draw[very thick](1,0.75) -- (2,0.75);

\draw[dashed] (1.5,0.05)--(1.5,0.25);
\draw (1.2,0.05)--(1.8,0.05);

\draw[dashed] (1.5,1.91)--(1.5,4.04);
\draw (1.2,4.04)--(1.8,4.04);

%\draw (1.5,0)--(1.5,-0.2);
\node at (1.5,-0.5) {\footnotesize{Cluster \#1}};

%cluster #2
\filldraw[fill=blue!25]
(4,0.13) -- (4,2.32) -- (5,2.32) --  (5,0.13) -- (4,0.13);

\draw[very thick](4,1.39) -- (5,1.39);

\draw[dashed] (4.5,0.05)--(4.5,0.13);
\draw (4.2,0.05)--(4.8,0.05);

\draw[dashed] (4.5,2.32)--(4.5,4.60);
\draw (4.2,4.60)--(4.8,4.60);

%\draw (3.45,4.45)--(3.55,4.55);
%\draw (3.55,4.45)--(3.45,4.55);

%\draw (3.45,4.32)--(3.55,4.42);
%\draw (3.55,4.32)--(3.45,4.42);

%\draw (4.5,0)--(4.5,-0.2);
\node at (4.5,-0.5) {\footnotesize{Cluster \#2}};

%cluster #3
\filldraw[fill=blue!25]
(7,0.28) -- (7,1.93) -- (8,1.93) --  (8,0.28) -- (7,0.28);

\draw[very thick](7,1.25) -- (8,1.25);

\draw[dashed] (7.5,0.28)--(7.5,0.05);
\draw (7.2,0.05)--(7.8,0.05);

\draw[dashed] (7.5,1.93)--(7.5,3.85);
\draw (7.2,3.85)--(7.8,3.85);

%\draw (7.5,0)--(7.5,-0.2);
\node at (7.5,-0.5) {\footnotesize{Cluster \#3}};

%cluster #4
\filldraw[fill=blue!25]
(10,0.65) -- (10,1.68) -- (11,1.68) --  (11,0.65) -- (10,0.65);

\draw[very thick](10,0.93) -- (11,0.93);

\draw[dashed] (10.5,0.65)--(10.5,0.18);
\draw (10.2,0.18)--(10.8,0.18);

\draw[dashed] (10.5,1.68)--(10.5,2.22);
\draw (10.2,2.22)--(10.8,2.22);

\draw (10.45,3.80)--(10.55,3.90);
\draw (10.55,3.80)--(10.45,3.90);

%\draw (10.5,0)--(10.5,-0.2);
\node at (10.5,-0.5) {\footnotesize{Cluster \#4}};

\end{tikzpicture}
\end{subfigure}

\caption{Distribution of inconsistency ratios $\mathit{CR}$, dataset S4, $k=4$ clusters}
\label{Fig3}
\end{figure}

%\end{document}

The inconsistency ratios of the cluster centres are 0.007, 0.008, 0.028, 0.063 in the case of $D_1$, while 0.007, 0.009, 0.022, 0.013 in the case of $D_3$. Similar to the results in Section~\ref{Sec41}, the clusters are relatively uniform with respect to the level of inconsistency as can be seen in Figure~\ref{Fig3}. Even though the cluster centres are slightly less inconsistent for $D_3$, each centre has an acceptable inconsistency according to the famous 10\% rule of thumb. Naturally, this is not guaranteed in other datasets but the LP model of the $k$-medoids problem (Section~\ref{Sec3}) might contain any (linear) restriction on the inconsistency of the cluster centres.

As the cluster centres are PCMs, the associated priority weights can help the interpretation of the clusters. To that end, we have used the geometric mean method (Section~\ref{Sec21}).
For the dissimilarity measure $D_1$, they are as follows:
\[
\left[
\begin{array}{c}
0.495\\
0.291\\
0.067\\
0.148
\end{array}
\right], \qquad
\left[
\begin{array}{c}
0.155\\
0.114\\
0.310\\
0.420
\end{array}
\right], \qquad
\left[
\begin{array}{c}
0.260\\
0.102\\
0.049\\
0.589
\end{array}
\right], \qquad
\left[
\begin{array}{c}
0.511\\
0.054\\
0.180\\
0.254
\end{array}
\right].
\]
On the other hand, for index $D_3$:
\[
\left[
\begin{array}{c}
0.495\\
0.291\\
0.067\\
0.148
\end{array}
\right], \qquad
\left[
\begin{array}{c}
0.109\\
0.089\\
0.284\\
0.517
\end{array}
\right], \qquad
\left[
\begin{array}{c}
0.403\\
0.105\\
0.339\\
0.153
\end{array}
\right], \qquad
\left[
\begin{array}{c}
0.368\\
0.065\\
0.113\\
0.455
\end{array}
\right].
\]
Against our conjecture, the second and the third summer houses do not have the highest priority in any cluster; the first and the last alternatives are the best in two clusters, respectively.

Aggregating the individual pairwise comparison matrices by the geometric means of the entries \citep{AczelSaaty1983} and applying $\mathit{LLSM}$ to the common matrix results in the following priority vector:
\begin{equation} \label{Agg_weights}
\left[
\begin{array}{c}
0.410 \\
0.164 \\
0.146 \\
0.279
\end{array}
\right].   
\end{equation}
The implied ranking $1 \succ 4 \succ 2 \succ 3$ differs from the ranking in any cluster centre, thus, the aggregated ranking does not correspond to the preferences of any group.

Clustering may provide an aggregation procedure if there is only one cluster, whose centre represents ``best'' the whole set of PCMs. The cluster centres are
\[
\mathit{CC}_1 = \left[
\begin{array}{cccc}
1 &	2.000&	3.000&	1.500\\
0.500&	1 &	2.000&	0.500\\
0.333&	0.500&	1 &	0.250\\
0.667&	2.000&	4.000&	1 
\end{array}
\right] \quad \text{and} \quad
\mathit{CC}_3 = \left[
\begin{array}{cccc}
1 &	2.000&	1.500&	1.000\\
0.500&	1 &	0.667&	0.667\\
0.667&	1.500&	1 &	0.667\\
1.000&	1.500&	1.500&	1 
\end{array}
\right]
\]
for measures $D_1$ and $D_3$, respectively, and the corresponding weight vectors are
\[
\left[
\begin{array}{c}
0.381\\
0.185\\
0.099\\
0.334
\end{array}
\right] \qquad \text{and} \qquad
\left[
\begin{array}{c}
0.319\\
0.166\\
0.219\\
0.296
\end{array}
\right],
\]
which lead to the rankings $1 \succ 4 \succ 2 \succ 3$ and $1 \succ 4 \succ 3 \succ 2$, respectively. The rankings are identical to the ranking derived from the weights~\eqref{Agg_weights} associated with the aggregated matrix, except for a rank reversal at the bottom in the case of dissimilarity measure $D_3$. On the other hand, the relative weights of the first and the last alternatives are closer to each other according to both clusters than according to the result obtained by a reasonable aggregation of the individual matrices given in~\eqref{Agg_weights}.

\subsection{The appropriate number of clusters} \label{Sec43} 

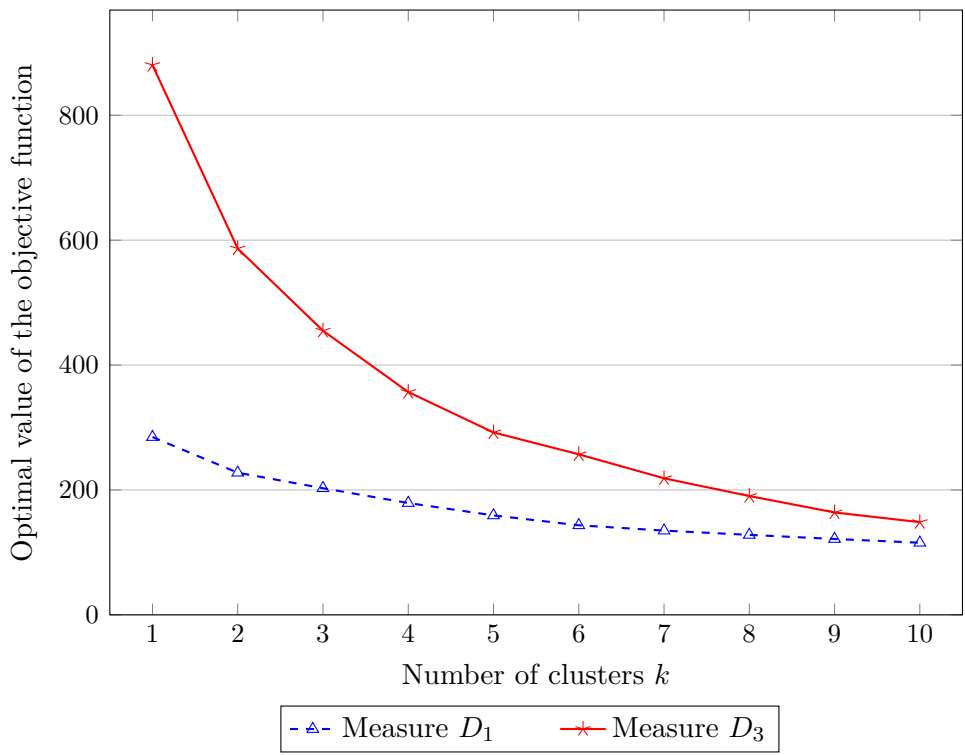
\begin{figure}[t!]
\centering

\begin{tikzpicture}
\begin{axis}[
xlabel = Number of clusters $k$,
x label style = {font=\small},
x tick label style={/pgf/number format/1000 sep=},
xtick = data,
ylabel = Optimal value of the objective function,
y label style = {font=\small},
%y tick label style = {/pgf/number format/.cd,fixed,fixed zerofill,precision=2},
width = 0.8\textwidth,
height = 0.6\textwidth,
ymajorgrids = true,
xmin = 0.5,
xmax = 10.5,
ymin = 0,
%ymax = 31,
%max space between ticks=50,
legend style = {font=\small,at={(0.2,-0.15)},anchor=north west,legend columns=2},
legend entries = {Measure $D_1\qquad$,Measure $D_3$}
] 
% D1
\addplot [blue, thick, dashed, mark=triangle, mark size=2.5pt, mark options={solid,thin}] coordinates {
(1,284.71651232473)
(2,227.609073926208)
(3,202.753133921767)
(4,179.064869588144)
(5,159.249709500747)
(6,143.414697066035)
(7,134.887464659981)
(8,128.070518847741)
(9,121.491541317771)
(10,115.482236811349)
};
% D3
\addplot [red, thick, solid, mark=star, mark size=3pt, mark options={solid,thin}] coordinates {
(1,880.402650073308)
(2,586.536975615243)
(3,455.011075803225)
(4,356.818341319861)
(5,292.087200249852)
(6,257.048311383301)
(7,218.686435412982)
(8,190.266886826488)
(9,163.970220139398)
(10,148.418236003698)
};
\end{axis}
\end{tikzpicture}

\caption{The optimum as a function of the number of clusters, dataset S4}
\label{Fig4}
\end{figure}

%\end{document}

There is no unique or standard procedure to determine the appropriate number of clusters. Naturally, one might have knowledge about the number of clusters based on expert opinion or the demographic background of the DMs.

Nonetheless, it is a more interesting case if no previous information is available on the expected/desired number of clusters. A popular choice could be the so-called ``elbow'' method: the sum of distances from the cluster centres (the objective function of the LP in Section~\ref{Sec3}) is plotted for all relevant numbers of clusters, and the value for which the decreasing trend flattens is chosen. The corresponding chart for sample S4 is shown in Figure~\ref{Fig4}, suggesting that $k=4$ or $k=5$ clusters is the best option as a further increase in $k$ does not lead to a substantial reduction in the optimal value of the objective function.

A more sophisticated version of the ``elbow'' method is the gap statistic  \citep{TibshiraniWaltherHastie2001}. Although it ``\emph{is designed to be applicable to any clustering method and distance measure}'' \citep[p.~411]{TibshiraniWaltherHastie2001}, during the implementation uniformly distributed samples are generated in a multivariate Euclidean space. Therefore, this method does not seem to be suitable for our problem.

Several other approaches are available to obtain the optimal number of clusters but the majority of them are based on the sum of squares. Although the variance could be calculated, we think it has no common meaning for the dissimilarity measures considered.

\begin{table}[t!]
    \centering
\caption{Mean silhouette values, sample S4}
\label{Table3}
    \rowcolors{1}{}{gray!20}
    \begin{tabularx}{0.6\textwidth}{cCC} \toprule
Number of clusters & Measure $D_1$ & Measure $D_3$ \\ \bottomrule
2 & 0.286 & 0.467 \\
3 & 0.285 & 0.407 \\
4 & 0.260 & 0.503 \\
5 & 0.310 & 0.510 \\
6 & 0.293 & 0.494 \\
7 & 0.302 & 0.477 \\
8 & 0.321 & 0.488 \\
9 & 0.337 & 0.535 \\
10 & 0.345 & 0.568 \\
11 & 0.348 & 0.556 \\
12 & 0.325 & 0.571 \\
13 & 0.325 & 0.553 \\
14 & 0.330 & 0.545 \\
15 & 0.327 & 0.538 \\
16 & 0.324 & 0.514 \\
17 & 0.308 & 0.512 \\
18 & 0.310 & 0.523 \\
19 & 0.306 & 0.521 \\
20 & 0.310 & 0.518 \\ \toprule
    \end{tabularx}
\end{table}

One exception is the silhouette method \citep{KaufmanRousseeuw1990}, which uses the dissimilarity matrix $\Delta$ but does not rely on any specific property of Euclidean spaces. The average silhouette values are given in Table~\ref{Table3}. They tend to grow for small values of $k$, the maximum is reached at 11 (12) clusters in the case of index $D_1$ ($D_3$). However, these numbers seem to be unreasonably high.

\input{Figure5_dendrograms}

Another common method is using a hierarchical clustering method and choosing the number of clusters based on dendrograms, which are presented in Figure~\ref{Fig5}. Note that Figure~\ref{Fig5a} has been derived by the square of $D_1$ to make the structure more visible. The dendrogram shows the level of dissimilarity at which the corresponding objects are merged into the same cluster. 
According to these dendrograms, the number of clusters $k$ should be between three and six in our case.

To conclude, we currently could not recommend any method that immediately gives the number of clusters. The problem is somewhat analogous to the choice of the dissimilarity measure, where there is no perfect solution, too.

\subsection{Typical patterns in pairwise comparison matrices} \label{Sec44} 

This section considers another possible application of the proposed clustering approach. In particular, we are not interested in the preferences among the alternatives (i.e. which alternative is the best) but look for typical patterns in the structure of preferences and for common sources of inconsistency. Therefore, the alternatives are relabelled: the most preferred alternative will be the first, the second most preferred the second, and so on. To that end, the alternatives are ranked by the geometric mean method (Section~\ref{Sec21}).
For dataset S4, dissimilarity measure $D_1$, and four clusters, which is the case presented in Section~\ref{Sec42}, the cluster centres are:
\[
\mathit{CC}_1^{(1)} = \left[
\begin{array}{cccc}
1 &	2.000&	5.000&	5.000\\
0.500&	1 &	3.000&	3.000\\
0.200&	0.333&	1 &	1.500\\
0.200&	0.333&	0.667&	1 
\end{array}
\right], \qquad
\mathit{CC}_1^{(2)} = \left[
\begin{array}{cccc}
1 &	2.000&	2.500&	4.000\\
0.500&	1 &	1.500&	3.500\\
0.400&	1.667&	1 &	3.000\\
0.250&	0.286&	0.333&	1 
\end{array}
\right],
\]
\[
\mathit{CC}_1^{(3)} = \left[
\begin{array}{cccc}
1 &	{\it 5.000}&	9.000&	{\bf 9.000}\\
0.200&	1 &	{\it 7.000}&	7.000\\
0.111&	0.143&	1 &{\it	5.000}\\
0.111&	0.143&	0.200&	1 
\end{array}
\right], \qquad
\mathit{CC}_1^{(4)} = \left[
\begin{array}{cccc}
1 &	3.000&	5.000&	7.000\\
0.333&	1 &	3.000&	6.000\\
0.200&	0.333&	1 &	4.000\\
0.143&	0.167&	0.250&	1 
\end{array}
\right],
\]
and the clusters contain 14, 18, 10, and 26 matrices, respectively.

\begin{figure}[t!]
\centering

\begin{tikzpicture}

\draw[very thick](-0.5,0) -- (12.5,0);

\draw[->, very thick](0,-0.5) -- (0,5.5);

\draw[very thick](-0.1,1.43) -- (0.1,1.43);
\node at (-0.4,1.43) {0.1};
\draw[very thick](-0.1,2.86) -- (0.1,2.86);
\node at (-0.4,2.86) {0.2};
\draw[very thick](-0.1,4.29) -- (0.1,4.29);
\node at (-0.4,4.29) {0.3};

%cluster #1
\filldraw[fill=blue!25]
(1,0.14) -- (1,1.00) -- (2,1.00) --  (2,0.14) -- (1,0.14);

\draw[very thick](1,0.35) -- (2,0.35);

\draw[dashed] (1.5,0.05)--(1.5,0.14);
\draw (1.2,0.05)--(1.8,0.05);

\draw[dashed] (1.5,1.00)--(1.5,1.47);
\draw (1.2,1.47)--(1.8,1.47);

\draw (1.45,4.45)--(1.55,4.55);
\draw (1.55,4.45)--(1.45,4.55);

\draw (1.5,0)--(1.5,-0.2);
\node at (1.5,-0.5) {\scriptsize{Cluster \#1}};

%cluster #2
\filldraw[fill=blue!25]
(4,0.23) -- (4,0.73) -- (5,0.73) --  (5,0.23) -- (4,0.23);

\draw[very thick](4,0.26) -- (5,0.26);

\draw[dashed] (4.5,0.05)--(4.5,0.23);
\draw (4.2,0.05)--(4.8,0.05);

\draw[dashed] (4.5,0.73)--(4.5,1.22);
\draw (4.2,1.22)--(4.8,1.22);

\draw (4.45,2.27)--(4.55,2.37);
\draw (4.55,2.27)--(4.45,2.37);

\draw (4.5,0)--(4.5,-0.2);
\node at (4.5,-0.5) {\scriptsize{Cluster \#2}};

%cluster #3
\filldraw[fill=blue!25]
(7,2.35) -- (7,3.85) -- (8,3.85) --  (8,2.35) -- (7,2.35);

\draw[very thick](7,3.00) -- (8,3.00);

\draw[dashed] (7.5,2.12)--(7.5,2.35);
\draw (7.2,2.12)--(7.8,2.12);

\draw[dashed] (7.5,3.85)--(7.5,4.14);
\draw (7.2,4.14)--(7.8,4.14);

\draw (7.5,0)--(7.5,-0.2);
\node at (7.5,-0.5) {\scriptsize{Cluster \#3}};

%cluster #4
\filldraw[fill=blue!25]
(10,0.89) -- (10,1.70) -- (11,1.70) --  (11,0.89) -- (10,0.89);

\draw[very thick](10,1.21) -- (11,1.21);

\draw[dashed] (10.5,0.23)--(10.5,0.89);
\draw (10.2,0.23)--(10.8,0.23);

\draw[dashed] (10.5,1.70)--(10.5,2.35);
\draw (10.2,2.35)--(10.8,2.35);

\draw (10.45,4.32)--(10.55,4.42);
\draw (10.55,4.32)--(10.45,4.42);

\draw (10.45,2.99)--(10.55,3.09);
\draw (10.55,2.99)--(10.45,3.09);

\draw (10.5,0)--(10.5,-0.2);
\node at (10.5,-0.5) {\scriptsize{Cluster \#4}};
\end{tikzpicture}

\caption{Distribution of inconsistency ratios $\mathit{CR}$, dataset S4, $k=4$ clusters, measure $D_1$, relabelled pairwise comparison matrices}
\label{Fig6}

\end{figure}
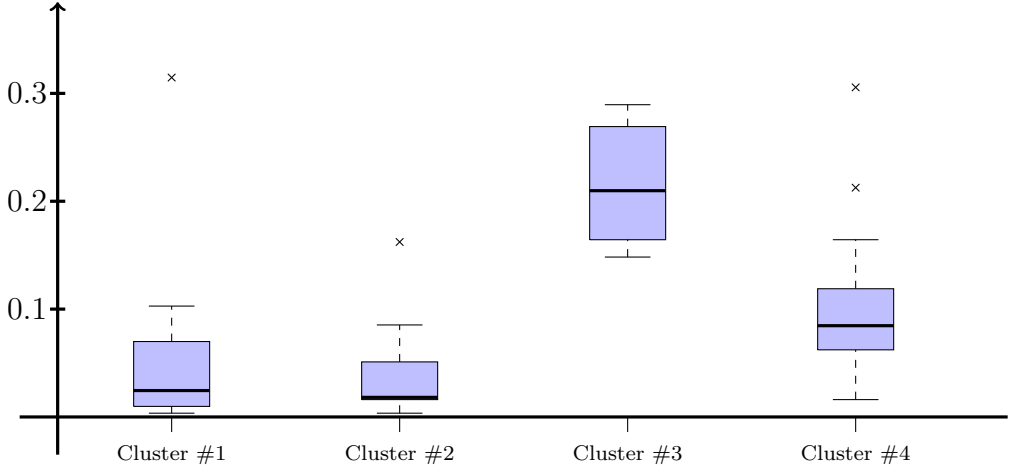

Figure \ref{Fig6} reports the boxplots of inconsistency ratios in all clusters analogously to Figure~\ref{Fig3a}. Contrary to the previous results, now there exists a cluster (Cluster \#3) that contains highly inconsistent matrices. We have tried to find a pattern in these 10 matrices. Consistency implies $a_{14} = a_{12} \times a_{23} \times a_{34}$. This is definitely not true for $\mathit{CC}_1^{(3)}$, where $a_{41}=9$ but $a_{12} \times a_{23} \times a_{34} = 175$.
The other matrices of Cluster \#3 are similar to the cluster centre, including the property that all the six comparisons above the diagonal are higher than one.
In other words, while these DMs have clear ordinal preferences, they are not able to appropriately translate them into numbers.

\section{Discussion} \label{Sec5}

In the following, two issues are considered: Section~\ref{Sec51} provides some thoughts on the potential contribution of clustering to the analysis of group decision making problems based on pairwise comparisons, while Section~\ref{Sec52} discusses how demographic variables can be utilised in the proposed clustering approach.

\subsection{The potential role of cluster analysis in the study of pairwise comparison matrices} \label{Sec51}

Pairwise comparison matrices can be classified in several ways such as by the best alternative, by the ranking of the alternatives, or by their level of inconsistency. However, this procedure usually requires an initial assumption on which the classification is carried out, and the groups are created after a reduction of dimension, which certainly loses some information about the original pairwise comparisons. For instance, inconsistency ``measures'' the matrices on a one-dimensional scale, hence, many ``qualitatively'' different matrices might have the same inconsistency. Although this is not necessarily a problem if inconsistency is the focus of the analysis, it can be misleading if dissimilar matrices are grouped together.

On the other hand, cluster analysis is a method of exploratory data analysis that has no underlying idea of the classification, and considers the individual pairwise comparison matrices without any transformation. Naturally, this flexibility makes the result dependent on the measure of similarity (Section~\ref{Sec22}) used. Nonetheless, clustering is able to uncover surprising patterns in a dataset that are difficult to recognise without a particular investigation as Section~\ref{Sec4} illustrates:
\begin{itemize}
\item
A typo (mistakenly reversed comparison) is found in a pairwise comparison matrix (Section~\ref{Sec41});
\item
Pairwise comparison matrices can be quite different---that is, belong to different clusters---even if their inconsistencies are close and the best alternatives coincide (Section~\ref{Sec42});
\item
A potential source of inconsistency (inappropriate conversion of robust ordinal preferences into cardinal values) is identified (Section~\ref{Sec44}).
\end{itemize}
Hopefully, our paper will inspire further instructive applications of clustering pairwise comparison matrices.

\subsection{Incorporating demographic factors into the cluster analysis} \label{Sec52} 

\begin{table}[t!] 
\centering
\caption{An illustrative contingency table for clusters and gender}
\label{Table4}
\begin{threeparttable}
    \rowcolors{1}{}{gray!20}
    \begin{tabularx}{0.5\textwidth}{LCCC} \toprule
        Cluster & Male & Female & $\sum$ \\ \bottomrule
        $\mathit{CL}_1^{(1)}$ & 7 & 19 & 26 \\
        $\mathit{CL}_1^{(2)}$ & 5 & 10 & 15 \\
        $\mathit{CL}_1^{(3)}$ & 1 & 8 & 9 \\
        $\mathit{CL}_1^{(4)}$ & 3 & 15 & 18 \\ \bottomrule
        $\sum$ & 16 & 52 & 68 \\ \toprule
    \end{tabularx}
\begin{tablenotes} \footnotesize
\item
Dataset S4, four clusters, measure $D_1$
\end{tablenotes}
\end{threeparttable}
\end{table}

In many application, further information is available about the DMs, and the clusters might be determined at least partially by demographic variables. The suggested clustering method can help this kind of analysis as well. The dataset studied in Section~\ref{Sec4} contains the age category and the gender of the DMs, furthermore, they faced three different filling orders.
Table~\ref{Table4} reveals the joint distribution of the four clusters from Section~\ref{Sec42} and gender. Although the proportion of male and female students is not the same in all clusters, the differences are statistically insignificant. We have found no significant association with the clusters for any other variables.

Naturally, in other applications more information may be available from the DMs, and the cluster analysis can be based on demographic factors. Another possibility could be to define the distance between decision makers by considering both their preferences and other, such as demographic, characteristics; or a bicluster approach where both preferences and demographic values should be similar in any cluster.

\section{Conclusions} \label{Sec6}

This paper has provided a new perspective on group decision making, especially large-scale group decision making, by proposing the $k$-medoids clustering for pairwise comparison matrices.
Our method has some advantages over other solutions:
\begin{itemize}
\item 
it is independent of the specification of the weighting method;
\item 
it can handle incomplete data such that the impact of incomplete pairwise comparison matrices, which contain less information, is inherently reduced;
\item 
it is more robust to outliers than the $k$-means clustering algorithm;
\item
the cluster centres are guaranteed to be individual pairwise comparison matrices, making them easier to accept by the decision-makers;
\item
its LP formulation allows for adding various restrictions, for example, regarding the inconsistency of the cluster centres.
\end{itemize}
The suggested approach has been used to analyse the experimental dataset of \citet{BozokiDezsoPoeszTemesi2013}

Hopefully, all practitioners dealing with data from large-scale group decision-making may benefit from using the clustering model presented here. Further investigations are especially welcome because, at the moment, there are few results on choosing the dissimilarity measure underlying the $k$-medoids algorithm or the appropriate number of clusters $k$.

\section*{Acknowledgements}
\addcontentsline{toc}{section}{Acknowledgements}
\noindent
Four anonymous reviewers gave useful remarks on earlier drafts. \\
The research was supported by the National Research, Development and Innovation Office under Grants FK 145838 and TKP2021-NKTA-01 NRDIO, and the J\'anos Bolyai Research Scholarship of the Hungarian Academy of Sciences.

\bibliographystyle{apalike}
\bibliography{All_references}

\end{document}